\newtheorem{theorem}{Theorem}[section]
\newtheorem{corollary}[theorem]{Corollary}
\theoremstyle{definition}
\newtheorem{definition}[theorem]{Definition}
\newtheorem{example}[theorem]{Example}
\theoremstyle{remark}
\newtheorem{remark}[theorem]{Remark}
\numberwithin{equation}{section}
\newcommand{\Q}{\mathbb{Q}}
\newcommand{\pj}{\mathbb{P}}
\newcommand{\fsl}{\mathfrak{sl}}
\newcommand{\fgl}{\mathfrak{gl}}
\DeclareMathOperator{\PGL}{PGL}
\DeclareMathOperator{\Resultant}{Resultant}
\DeclareMathOperator{\Sym}{Sym}
\begin{document}

\title{Effective Radical Parametrization of Trigonal Curves}


\author{Josef Schicho}
\address{Johann Radon Institute for Computational and Applied Mathematics (RICAM), Austrian Academy of Sciences, Altenbergerstrasse 69, A-4040 Linz, Austria}
\curraddr{}
\email{josef.schicho@oeaw.ac.at, david.sevilla@oeaw.ac.at}
\thanks{Partially supported by the Austrian FWF project P 22766-N18 "Radical parametrizations of algebraic curves"}

\author{David Sevilla}

\subjclass[2010]{Primary 14H51, 68W30; Secondary 17B45}

\date{}

\begin{abstract}
Let $C$ be a non-hyperelliptic algebraic curve. It is known that its canonical image is the intersection of the quadrics that contain it, except when $C$ is trigonal (that is, it has a linear system of degree 3 and dimension 1) or isomorphic to a plane quintic (genus 6). In this context, we present a method to decide whether a given algebraic curve is trigonal, and in the affirmative case to compute a map from $C$ to the projective line whose fibers cut out the linear system.
\end{abstract}

\maketitle

\section{Introduction}

In the context of symbolic computation for algebraic geometry, an unsolved problem (at least from a computational perspective) is the parametrization of algebraic curves by radicals. Allowing radicals rather than just rational functions greatly enlarges the class of parametrizable functions. For example, one class of curves which are clearly parametrizable by radicals is that of hyperelliptic curves. Every such curve can be written as $y^2 = P(x)$ for some polynomial $P(x)$, and we can quickly write the parametrization $x=t,y=\sqrt{P(t)}$.

This can be taken further: the roots of univariate polynomials of degree $\leq4$ can be written in terms of radicals. Therefore, curves which can be expressed as $f(x,y)=0$ where one of the variables occurs with degree $\leq4$ can also be parametrized by radicals. The minimum degree which can be obtained by is called the \emph{gonality of the curve}; hyperelliptic curves are precisely those of gonality two. It is thus interesting to characterize the curves of gonality three (or \emph{trigonal}) and four, and further to produce algorithms that detect this situation and can even compute a radical parametrization. The description of such an algorithm for trigonal curves is the purpose of this article.

In this article, the coefficient field always has characteristic zero, and it will generally be assumed to be algebraically closed although we will point out the necessary modifications for the non-algebraically-closed case when they arise. Our algorithm is based on the \emph{Lie algebra method} introduced in \cite{deGraafHarrisonPilnikovaSchicho2006a} (see also \cite{deGraafPilnikovaSchicho2009a}). We use Lie algebra computations (which mostly amount to linear algebra) to decide if a certain algebraic variety associated to the input curve is a rational normal scroll, which is the case precisely when the curve is trigonal. Further, we can compute an isomorphism between that variety and the scroll when it exists. Algorithm \ref{ALG: sketch detect trigonality} sketches the classification part of the algorithm (that is, the detection of trigonality as opposed to the calculation of a $3:1$ map). It is based on Theorem \ref{TH: classification of the quadrics}.

\begin{algorithm}
  \caption{Sketch of algorithm to detect trigonality}\label{ALG: sketch detect trigonality}
  \BlankLine
  \KwIn{a non-hyperelliptic curve $C$ of genus $g\geq3$}
  \KwOut{\texttt{true} if $C$ is trigonal, \texttt{false} otherwise}
  \BlankLine
  Compute the canonical map $\varphi\colon C\rightarrow\pj^{g-1}$ and its image $\varphi(C)$\;
  Compute the intersection $D$ of all the quadrics that contain $\varphi(C)$\;
  \eIf{$D=C$}{
    \Return false\;
  }{
    Determine which type of surface is $D$\;
    \uIf{$D=\pj^2$}{
      \Return true \hspace{5ex}\tcp*[h]{$g=3$}\;
    }
    \uElseIf{$D$ is a rational normal scroll}{
      \Return true\;
    }
    \Else{\Return false \hspace{5ex}\tcp*[h]{Veronese} \;}
  }
\end{algorithm}

The article is structured as follows. Section \ref{SEC: Trigonality background} recalls the classical theoretical background on trigonal curves and rational scrolls needed. Section \ref{SEC: Lie algebras} is a quick survey of the relevant concepts of Lie algebras and their representations. Section \ref{SEC: Lie algebra method} describes the method proper. Our computational experiences with it are reported in Section \ref{SEC: Computational experiences}.

\section{Classical results on trigonality}\label{SEC: Trigonality background}

Let $C$ be an algebraic curve of genus $g\geq4$ and assume that $C$ is not hyperelliptic, so that it is isomorphic to its image by the canonical map $\varphi\colon C\rightarrow\pj^{g-1}$. Enriques proved in \cite{Enriques1919a} that $\varphi(C)$ is the intersection of the quadrics that contain it, except when $C$ is trigonal (that is, it has a $g^1_3$) or isomorphic to a plane quintic ($g=6$); the proof was completed in \cite{Babbage1939a}. In those cases, the corresponding varieties are minimal degree surfaces, see \cite[p. 522 and onwards]{GriffithsHarris1978a}. Although not relevant in our case it is worth mentioning that \cite{Petri1923a} proves that the ideal is always generated by the quadrics and cubics containing the canonical curve.

We exclude from our study the curves with genus lower than 3 since they are hyperelliptic, thus they have a $g^1_2$ which can be made into a $g^1_3$ by adding a base point; the problem is then to find a point in the curve over the field of definition. Also, if the curve is non-hyperelliptic of genus 3, it is isomorphic to its canonical image which is a quartic in $\pj^2$, and the system of lines through any point of the curve cuts out a $g^1_3$.

There exist efficient algorithms for the computation of the canonical map, determination of hyperellipticity, and calculation of the space of forms of a given degree containing a curve, for example in Magma \cite{BosmaCannonPlayoust1997a} and at least partially in Maple.

The following theorem summarizes the classification of canonical curves according to the intersection of the quadric hypersurfaces that contain them.

\begin{theorem}[{\cite[p. 535]{GriffithsHarris1978a}}]\label{TH: classification of the quadrics}
For any canonical curve $C\subset\pj^{g-1}$ over an algebraically closed field, either
\begin{enumerate}
 \item $C$ is entirely cut out by quadric hypersurfaces; or
 \item $C$ is trigonal, in which case the intersection of all quadrics containing $C$ is isomorphic to the rational normal scroll swept out by the trichords of $C$; or
 \item $C$ is isomorphic to a plane quintic, in which case the intersection of the quadrics containing $C$ is isomorphic to the Veronese surface in $\pj^5$, swept out by the conic curves through five coplanar points of $C$.
\end{enumerate}
\end{theorem}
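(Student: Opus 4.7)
My plan is to analyze, for each kind of ``exceptional'' linear system on $C$, the span of its fibers under the canonical map $\varphi$ via the geometric Riemann--Roch theorem, and then to reconcile those spans with the classification of minimal-degree surfaces in projective space.

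For the ``easy'' direction, suppose $|D|$ is a $g^1_3$ on $C$. Geometric Riemann--Roch gives $\dim\langle\varphi(D_t)\rangle = \deg D - 1 - \dim|D| = 1$, so every effective divisor $D_t$ in the pencil spans a line $L_t \subset \pj^{g-1}$, and the union $S = \bigcup_{t\in\pj^1} L_t$ is a ruled surface containing $\varphi(C)$. Any quadric $Q$ through $\varphi(C)$ meets $L_t$ in at least three points, so $L_t\subset Q$ for every $t$ and hence $S\subset Q$. Consequently the intersection of all quadrics through $\varphi(C)$ contains $S$. The analogous argument for a plane quintic uses that $K_{C}=\mathcal{O}_{\pj^2}(2)|_C$, so $\varphi$ factors through the second Veronese $v_2\colon \pj^2\hookrightarrow\pj^5$; five collinear points on $C'\subset\pj^2$ impose only three conditions on conics, and a quadric of $\pj^5$ meeting the corresponding conic $v_2(\ell)$ in five points must contain it. Varying $\ell$ forces any such quadric to contain the whole Veronese.

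To identify $S$ in the trigonal case as a rational normal scroll, I would combine Castelnuovo's lower bound (an irreducible nondegenerate surface in $\pj^{g-1}$ has degree $\geq g-2$) with a direct count using the $g^1_3$ giving $\deg S \leq g-2$, forcing equality. Surfaces of minimal degree in projective space are classified as either rational normal scrolls or the Veronese in $\pj^5$; the Veronese is excluded because its tautological ruling is by conics rather than by lines. In the plane-quintic case $g=6$ and the same minimal-degree classification singles out the Veronese as the only possibility for a surface swept out by a $2$-parameter family of conics meeting $\varphi(C)$ in five points.

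The hard direction---that quadrics cut out $\varphi(C)$ outside these two exceptional families---is the Enriques--Babbage theorem. The standard route is to induct on $g$: take a general hyperplane section $\Gamma\subset\pj^{g-2}$, reduce to a statement about canonical-type curves of lower genus, and control the failure of quadrics to separate points and tangent directions via Clifford's theorem and the general position lemma. The main obstacle, and the content of Petri's contribution, is ruling out ``hidden'' surface components of the quadric intersection that do not visibly arise as a trichord locus; this requires a careful analysis of the first few terms of the minimal free resolution of the canonical ideal, showing that any such component forces either a $g^1_3$ or a factorization of $\varphi$ through the Veronese.
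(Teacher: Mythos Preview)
The paper supplies no proof of this theorem; it is stated with a bare citation to Griffiths--Harris, p.~535, and used as classical background. There is therefore no in-paper argument to compare your proposal against.

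That said, your plan is essentially the proof given in the cited reference: geometric Riemann--Roch forces every trichord (resp.\ every Veronese image of a line on the plane quintic) to lie on each quadric through $\varphi(C)$; the swept surface is nondegenerate of minimal degree $g-2$, and the del Pezzo--Bertini classification of minimal-degree varieties identifies it as a scroll (resp.\ the Veronese); the converse is Enriques--Babbage. Two small remarks. First, the phrase ``a direct count using the $g^1_3$ giving $\deg S\le g-2$'' hides the actual work: the clean route is to realise $S$ as the image of the projective bundle $\pj\bigl(\pi_*\mathcal{O}_C(1)\bigr)$ over $\pj^1$ (with $\pi$ the trigonal cover) under its tautological series and read off the degree from the splitting type, or equivalently to observe that a general hyperplane section of $S$ is a unisecant rational normal curve in $\pj^{g-2}$. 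Second, invoking Petri's resolution is more than is needed and slightly misattributes the content: the set-theoretic statement here is Enriques--Babbage, and in Griffiths--Harris it is obtained from Castelnuovo's lemma and uniform position without any syzygy analysis; Petri's theorem is the strictly stronger ideal-theoretic statement about generation by quadrics and cubics, which the present paper explicitly mentions only in passing and does not use.
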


 We recall the definition of \emph{rational normal scroll} (from this point, simply scroll): given two nonnegative integers $m\geq n$ with $m+n\geq2$, the scroll $S_{m,n}$ is the Zariski closure of the image of $(s,t)\mapsto(1:s:s^2:\ldots:s^m:t:st:s^2t:\ldots:s^nt)\subset\pj^{m+n+1}$. It is defined by equations of degree two involving four terms each. It is a ruled surface, its pencil of lines being given by the fibers $s=constant$. This ruling is unique except when $m=n=1$ in which case there are two rulings. Any map $S_{m,n}\rightarrow\pj^1$ whose fibers are lines is called a \emph{structure map}.

If we work over a non-algebraically closed field $k$, one may get surfaces $S$ which are isomorphic to scrolls only over $\overline k$ (called twists). The structure map $S\rightarrow\pj^1$ over $\overline k$ is given by a divisor class, which may be not defined over $k$, or may be defined over $k$ but have no divisors in it over $k$. The first case can only occur for genus 4, see Section \ref{SEC: Case m=n} for the details.

In the second case (the divisor class is defined over $k$ but has no elements over it), one can define a map $S\rightarrow E$ whose fibers are lines, where $E$ is a conic over $k$ with no points defined over it (see \cite{PerezSendraWinkler2008a}). This is done by taking the structure map $S\rightarrow\pj^1$ and symmetrizing it with its Galois conjugates over $k$. Since $E$ has no points defined over $k$, neither does $S$ or the trigonal curve $C$. However, one can always go to a degree 2 extension of $k$ where $E$ has a point and work on that extension.

Let $M$ be a chosen algebraic variety (a ``model'') and $X\in\pj^N$ be any given variety. In some cases, we can use Lie algebra representations (section \ref{SEC: Lie algebras}) to decide if $X$ is projectively isomorphic to $M$ (we call this \emph{recognition} of $M$) and furthermore to compute a projective isomorphism between them in the affirmative case (we call this \emph{constructive recognition}). We will use the same terms for Lie algebras.

Strictly speaking, we are not interested in constructive recognition of scrolls, since we only need the structure map whose fibres will cut out the trigonal linear system. However, it is possible to use the method described below to construct isomorphisms to the models of the scrolls, as we will comment in each case.

\section{Lie algebras}\label{SEC: Lie algebras}

The Lie algebra of a projective variety is an algebraic invariant which is relatively easy to calculate when the variety is generated by quadrics (it is often cheaper than a Gr\"obner basis of the defining ideal, if only generators are given). We offer here a quick summary of relevant properties of Lie algebras in general, see \cite{deGraaf2000a,FultonHarris1991a} for a general overview. Most definitions and basic results can be found in the aforementioned references, we limit ourselves to what is relevant for our purposes.

\begin{definition}
Let $X\subset\pj^N$ be an embedded projective variety. The group of automorphisms of $\pj^N$ is $\PGL_{N+1}$, the group of all invertible matrices of size $N+1$ modulo scalar matrices. Let $\PGL_{N+1}(X)$ be the subgroup of all projective transformations that map $X$ to itself (this is always an algebraic group). The Lie algebra $L(X)$ of $X$ is defined as the tangent space of $\PGL_{N+1}(X)$ at the identity, together with its natural Lie product.
\end{definition}

\begin{example}\label{EX: L(P^N)=sl_{N+1}, sl_2 and basis} $ $
 \begin{enumerate}
  \item The Lie algebra of $\pj^N$ is the tangent space of $\PGL_{N+1}$ at the identity matrix; this is denoted $\fsl_{N+1}$ and its elements are trace zero matrices.
  \item In particular, $L(\pj^1)=\fsl_2$, whose elements are $2\times2$ trace zero matrices. It has dimension 3 and it has a basis
  \[
   h:=\begin{pmatrix} 1 & 0 \\ 0 & -1 \end{pmatrix} \, , \quad x:=\begin{pmatrix} 0 & 1 \\ 0 & 0 \end{pmatrix} \, , \quad y:=\begin{pmatrix} 0 & 0 \\ 1 & 0 \end{pmatrix}
  \]
  This particular basis is an instance of the so-called \emph{Chevalley basis} or \emph{canonical basis}, see \cite[Section 5.11]{deGraaf2000a}.
 \end{enumerate}
\end{example}

\begin{remark}
Note that the Lie algebra of any $X\subset\pj^N$ is a subalgebra of $\fsl_{N+1}$, since $\PGL_{N+1}(X)$ is a subgroup of $\PGL_{N+1}$ so the same relation holds for their tangent spaces.
\end{remark}

For varieties of general type (in particular curves of genus at least 2), $\PGL_{N+1}(X)$ is finite and therefore the Lie algebra is zero. On the other hand, the Veronese surface and the rational scrolls have Lie algebras of positive dimension. This allows us to reduce the recognition problem for these surfaces to  Lie algebra computations (see Section \ref{SEC: Lie algebra method}). The next theorem provides a fast way to compute the Lie algebra of the varieties we are interested in.

\begin{theorem}
Let $X\subset\pj^N$ such that $I(X)$ is generated by quadrics, and $G$ be a set of quadratic generators. Then
\[
 L(X)=\left\{M\in\fsl_{N+1}:\ \forall f\in G,\ \frac{d}{dt}f(I_N+tM)\bigg|_{t=0}\in I\right\}
\]
where by $\frac{d}{dt}f(I_N+tM)$ we mean the function $\frac{d}{dt}f$ applied to the image of the vector of variables by the transformation $I_N+tM$.
\end{theorem}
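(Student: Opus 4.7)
The plan is to express the condition ``$M$ belongs to the tangent space of $\PGL_{N+1}(X)$ at the identity'' as the condition that the linear map $\phi_M \colon f \mapsto \frac{d}{dt} f((I_N+tM)x)\big|_{t=0}$ on the polynomial ring $k[x_0,\ldots,x_N]$ carries $I(X)$ into itself, and then to use the derivation property of $\phi_M$ to reduce this preservation condition to the quadratic generators $G$.

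First I would establish the intermediate description
\[
 L(X) = \{ M \in \fsl_{N+1} : \phi_M(I(X)) \subseteq I(X) \}.
\]
The algebraic subgroup $\PGL_{N+1}(X) \subseteq \PGL_{N+1}$ is cut out by the polynomial conditions $f \circ A \in I(X)$ for $f \in I(X)$. Differentiating a one-parameter family $A(t) = I_N + tM + O(t^2)$ through the identity at $t=0$ converts each such condition into the requirement $\phi_M(f) \in I(X)$, and this linearised system defines precisely the Zariski tangent space at $I_N$. The restriction $M \in \fsl_{N+1}$ rather than $\fgl_{N+1}$ simply reflects the quotient by scalar matrices: for $M = \lambda I_N$ one computes $\phi_M(f) = \lambda \deg(f) \, f$ on any homogeneous $f$, so scalar matrices preserve every homogeneous ideal automatically and contribute nothing to $L(X)$. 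The forward inclusion of the theorem is then immediate, since $G \subseteq I(X)$.

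For the reverse inclusion, suppose $\phi_M(f) \in I(X)$ for every $f \in G$. The essential observation is that $\phi_M$ is a $k$-linear derivation of $k[x_0,\ldots,x_N]$: applying $\frac{d}{dt}|_{t=0}$ to the identity $(fg)((I_N+tM)x) = f((I_N+tM)x) \cdot g((I_N+tM)x)$ and using the product rule yields
\[
 \phi_M(fg) = \phi_M(f) \, g + f \, \phi_M(g).
\]
Any $f \in I(X)$ admits a representation $f = \sum_i h_i f_i$ with $f_i \in G$, hence
\[
 \phi_M(f) = \sum_i \phi_M(h_i) \, f_i + \sum_i h_i \, \phi_M(f_i).
\]
The first sum lies in $I(X)$ because each $f_i$ does, and the second by the hypothesis on the generators. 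Thus $\phi_M(I(X)) \subseteq I(X)$, and the intermediate description gives $M \in L(X)$.

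The only genuine obstacle I foresee is the first step: carefully matching the Zariski tangent space of the algebraic subgroup $\PGL_{N+1}(X)$ with the linearised ideal-preservation condition, and bookkeeping the passage from $\GL$ to $\PGL$. Once that identification is in place, the theorem reduces to the one-line Leibniz computation above together with the standard fact that a derivation preserving the generators of an ideal preserves the whole ideal.
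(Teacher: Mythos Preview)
The paper does not give its own proof of this statement: it simply cites \cite[Theorem~5]{deGraafPilnikovaSchicho2009a}. Your proposal therefore supplies strictly more than the paper does, and the argument you outline is correct. The two-step structure---first identify $L(X)$ with the set of $M\in\fsl_{N+1}$ for which the induced derivation $\phi_M$ preserves $I(X)$, then reduce to generators via the Leibniz rule---is the standard way to prove such a statement, and your handling of the $\GL$-versus-$\PGL$ bookkeeping (scalar matrices act by $\lambda\deg(f)\cdot f$ on homogeneous $f$, hence preserve every homogeneous ideal) is exactly right.

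It is worth noticing that your argument nowhere uses that the generators are quadrics: the Leibniz step works for any generating set of $I(X)$, and the identification of $L(X)$ with $\{M:\phi_M(I(X))\subseteq I(X)\}$ is likewise degree-agnostic. This is consistent with the authors' own Remark immediately following the theorem, where they say the hypothesis on the generators may be unnecessary but they ``do not know a proof''; your derivation argument in fact supplies one. The quadric hypothesis is computationally convenient (it makes the membership test $\phi_M(f)\in I$ a linear condition in a single graded piece), but it is not logically required for the equality of sets asserted in the theorem.
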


\begin{proof}
See \cite[Theorem 5]{deGraafPilnikovaSchicho2009a}.
\end{proof}

\begin{remark}
The previous theorem may be true without the hypothesis on the generators, but we do not know a proof; however this restricted version is all that we need for the upcoming discussion.
\end{remark}

\subsection{Representations of Lie algebras}\label{SEC: Reps of Lie algebras}

As in the case of groups, one can understand a lot of things about Lie algebras by thinking of them as spaces of matrices. This is the meaning of the concept of representation that we introduce now.

\begin{definition}
A \emph{representation} of a Lie algebra $L$ is a Lie algebra homomorphism $L\rightarrow\fgl(V)$ for some vector space $V$. This is equivalent to a bilinear action $L\times V\rightarrow V$ which turns $V$ into a \emph{Lie module over $L$}. The \emph{dimension} of a representation is the dimension of $V$.
\end{definition}

\begin{definition}$ $
\begin{enumerate}
 \item A linear subspace $W$ of $V$ is a \emph{submodule} iff $L\cdot W\subseteq W$, that is, if the action can be restricted to $W$.
 \item A module is \emph{irreducible} iff it only has trivial submodules (the trivial and total subspaces).
\end{enumerate}
\end{definition}

An important class of Lie algebras, which are basic building blocks in the classification theory of Lie algebras and the study of their representations, is that of \emph{semisimple} ones. The following convenient result holds:

\begin{theorem}[Weil's theorem]
If $L$ is semisimple, every finite-dimensional module over $L$ is a direct sum of irreducible modules.
\end{theorem}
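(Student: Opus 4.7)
The plan is to establish complete reducibility by showing that for any submodule $W \subset V$ there exists an $L$-stable complement. The main tool will be the Casimir operator. Since $L$ is semisimple, Cartan's criterion (applied to the faithful quotient $L/\ker\rho$, which is still semisimple) ensures that the trace form $\beta(x,y) = \operatorname{tr}(\rho(x)\rho(y))$ on $L$ is nondegenerate whenever $\rho\colon L \to \fgl(V)$ is nontrivial on each simple ideal. I would pick dual bases $\{x_i\}, \{y_i\}$ with respect to $\beta$ and set $C_\rho = \sum_i \rho(x_i)\rho(y_i) \in \fgl(V)$. A direct computation using invariance of $\beta$ shows that $C_\rho$ commutes with $\rho(L)$, and if $V$ is irreducible then $C_\rho$ acts as the scalar $\dim L / \dim V \neq 0$ by Schur's lemma.

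Next I would reduce the statement to the splitting of any short exact sequence $0 \to W \to V \to V/W \to 0$ of finite-dimensional $L$-modules, then handle first the special case where $V/W$ is the trivial one-dimensional module. Here I would induct on $\dim W$. The base case is $W$ irreducible. If $W$ is trivial the claim is immediate, so assume $W$ is nontrivial irreducible; then the Casimir operator of the representation on $V$ annihilates $V/W$ and restricts to a nonzero scalar on $W$, so $V = W \oplus \ker C$ as $L$-modules. The inductive step chooses any proper nontrivial submodule $W' \subset W$ and applies the induction hypothesis twice, once to $0 \to W/W' \to V/W' \to (V/W')/(W/W') \to 0$ and once to lift the resulting complement.

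For the general case I would consider $\operatorname{Hom}(V,W)$ as an $L$-module via $(x\cdot f)(v) = x\cdot f(v) - f(x\cdot v)$, and single out the subspaces
\[
\mathcal{V} = \{f \in \operatorname{Hom}(V,W) : f|_W \text{ is a scalar multiple of } \operatorname{id}_W\}, \qquad \mathcal{W} = \{f \in \mathcal{V} : f|_W = 0\}.
\]
Both are $L$-submodules and $\mathcal{V}/\mathcal{W}$ is one-dimensional and trivial, so the previous special case produces an $L$-stable line in $\mathcal{V}$ complementary to $\mathcal{W}$, i.e.\ an $L$-equivariant map $f\colon V \to W$ whose restriction to $W$ is a nonzero scalar; its kernel is the desired $L$-stable complement to $W$ in $V$.

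The main technical obstacle is the construction of the Casimir element and the verification that it acts as a nonzero scalar on any nontrivial irreducible representation: this requires the nondegeneracy of the trace form $\beta$ on a semisimple Lie algebra, which in turn rests on Cartan's criterion and the fact that semisimplicity is inherited by quotients. Once the Casimir is in hand, the rest of the argument is a fairly mechanical induction plus the $\operatorname{Hom}$-module trick.
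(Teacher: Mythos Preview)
Your sketch is the standard Casimir-element proof of Weyl's complete reducibility theorem and is correct, modulo two minor imprecisions: the scalar $\dim L/\dim V$ should read $\dim(L/\ker\rho)/\dim V$ unless $\rho$ is faithful (you implicitly acknowledge this by passing to the quotient, but then revert to writing $\dim L$), and when you assert that $C_\rho$ annihilates the one-dimensional quotient $V/W$ you are tacitly using that a semisimple $L$ satisfies $L=[L,L]$ and hence acts trivially on any one-dimensional module. Neither point is a genuine gap.

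As for comparison with the paper: the paper gives no argument of its own, deferring entirely to \cite[Section~4.4]{deGraaf2000a}. The proof there is the Casimir-element approach you have outlined (nondegeneracy of the trace form, the codimension-one special case, and the passage to $\operatorname{Hom}(V,W)$ for the general case), so your route coincides with the cited source.
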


\begin{proof}
See \cite[Section 4.4]{deGraaf2000a}.
\end{proof}

\subsection{The Lie algebra $\fsl_2$}\label{SEC: sl2}

A particularly important Lie algebra is $\fsl_2$ (Example \ref{EX: L(P^N)=sl_{N+1}, sl_2 and basis}). Since it will feature often in this article, we highlight here some of the properties that we will use.

Computationally speaking, it is easy to recognize $\fsl_2$. First we need a definition.

\begin{definition}
Let $L$ be a Lie algebra. For each $x\in L$, define a Lie endomorphism $ad_x\colon L\rightarrow L$ as $ad_x(y)=[x,y]$. Then the \emph{Killing form on $L$} is a bilinear map $B_L:L\times L\rightarrow k$ given by $B_L(x,y)=Trace(ad_X\circ ad_Y)$.
\end{definition}

\begin{theorem}\cite[Proposition 10]{deGraafPilnikovaSchicho2009a}
Let $L$ be a semisimple Lie algebra of dimension 3. Then $L$ is isomorphic to $\fsl_2$ iff its Killing form is isotropic.
\end{theorem}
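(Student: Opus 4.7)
The plan is to handle the two implications separately. For the forward direction, a direct calculation in the Chevalley basis $\{h,x,y\}$ of $\fsl_2$ from Example \ref{EX: L(P^N)=sl_{N+1}, sl_2 and basis} suffices: one has $ad_x(x)=0$, $ad_x(h)=-2x$, and $ad_x(y)=h$, so $ad_x^2$ sends the basis to $(0,0,-2x)$ and has trace zero. Hence $B_L(x,x)=0$ while $x\neq 0$, exhibiting isotropy.

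For the reverse direction, assume $L$ is semisimple of dimension $3$ with $B_L$ isotropic, and pick $e\neq 0$ with $B_L(e,e)=0$. The central step is to show that $ad_e$ is nilpotent. Since $L=[L,L]$ by semisimplicity, every element of $L$ is a sum of brackets, so $ad_e$ is a sum of commutators in $\fgl(L)$ and therefore $\operatorname{tr}(ad_e)=0$. By definition $\operatorname{tr}(ad_e^2)=B_L(e,e)=0$. Finally $\det(ad_e)=0$ because $ad_e(e)=0$ and $e\neq 0$. These three coefficients determine the characteristic polynomial of any $3\times 3$ matrix, forcing $\chi_{ad_e}(t)=t^3$, so $ad_e$ is nilpotent.

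The Jacobson--Morozov theorem, applied to this $ad$-nilpotent element of the semisimple Lie algebra $L$, then produces $h,f\in L$ with $[h,e]=2e$, $[h,f]=-2f$, $[e,f]=h$. Given a linear dependence $\alpha e+\beta h+\gamma f=0$, two successive applications of $ad_h$ yield $2\alpha e-2\gamma f=0$ and $4\alpha e+4\gamma f=0$; these equations force $\alpha=\gamma=0$ (since $e$ and $f$ are nonzero), and then $\beta=0$. Thus $\{e,h,f\}$ is a basis of the $3$-dimensional space $L$, and matching it with the Chevalley basis of $\fsl_2$ gives the required isomorphism.

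The main obstacle is the characteristic polynomial step, where one must combine three distinct ingredients --- semisimplicity, isotropy of $B_L$, and the fact that $e$ lies in the kernel of $ad_e$; beyond that, the conclusion rests on the classical Jacobson--Morozov theorem.
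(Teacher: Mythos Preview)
Your proof is correct. The paper itself does not supply a proof of this statement; it simply cites \cite[Proposition~10]{deGraafPilnikovaSchicho2009a} and moves on. The remark immediately following the theorem does, however, indicate that the cited argument is constructive and proceeds by building a Chevalley basis of $L$ starting from an isotropic vector of the Killing form --- precisely the skeleton of your reverse implication, where you locate an ad-nilpotent element $e$ via isotropy and then invoke Jacobson--Morozov to complete it to an $\fsl_2$-triple spanning $L$. The cited proof is presumably more explicit (it is meant to feed an algorithm, and the paper equates finding the isotropic vector with finding a rational point on a conic), whereas you treat Jacobson--Morozov as a black box; but the underlying strategy matches. One very minor point: in your independence step you use that $f$ and $h$ are nonzero, which follows immediately from the bracket relations and $e\neq 0$, but you might state it.
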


In particular, over an algebraically closed field $\fsl_2$ is the only semisimple algebra of dimension 3.

Once we have recognized $L\cong\fsl_2$, the construction of a Lie algebra isomorphism can be done by finding a Chevalley basis of $L$, see the proof of the previous theorem. In general, the procedure amounts to finding a rational point on a conic over the field of definition, which in the case of $\Q$ needs factorization of integers.

The irreducible representations of $\fsl_2$ can be described as follows: for every nonnegative integer $n$, there exists a unique irreducible representation (up to isomorphism) of dimension $n+1$. For $n=0$ this is just the trivial representation. For positive $n$, we describe it as an action of $\fsl_2$ on the $n+1$-dimensional vector space of homogeneous polynomials of degree $n$ in the variables $x,y$:
\[
 P(x,y) \cdot \begin{pmatrix} a & b \\ c & d \end{pmatrix} = P(ax+by,cx+dy)
\]
In terms of modules, they consist of a 1-dimensional module with the zero action, a 2-dimensional module $N$, and the $(n+1)$-dimensional symmetric powers $\Sym^n(N)$ for $n\geq2$.

\begin{remark}\label{REM: Images of h,x,y in Sym^n}
By choosing an adequate basis of eigenvectors of the image of $h$ by the representation, the images of the Chevalley basis can be taken to be
\[
 \setlength{\arraycolsep}{0.01em} h\mapsto
 \begin{pmatrix} n\\&n-2\\&&\ddots\\&&&-n+2\\&&&&-n \end{pmatrix}, \quad
 \setlength{\arraycolsep}{0.3em} x\mapsto
 \begin{pmatrix} 0&1\\&0&2\\&&\ddots&\ddots\\&&&0&n\\&&&&0 \end{pmatrix}, \quad
 y\mapsto
 \begin{pmatrix} 0\\n&0\\&\ddots&\ddots\\&&2&0\\&&&1&0 \end{pmatrix}
\]
See \cite[Section 5.1]{deGraaf2000a} and \cite[Lecture 11]{FultonHarris1991a}.
\end{remark}

\section{The Lie algebra method for trigonal curves}\label{SEC: Lie algebra method}


Consider the problem of recognizing $S_{m,n}$. In fact, since we normally have only a variety $X$ but not $m,n$, we want to decide if $X$ is isomorphic to $S_{m,n}$ for some unknown $m,n$. The knowledge of the representations of the Lie algebras of the scrolls, in fact of semisimple parts of them, will allow us to decide the answer and even to compute such an isomorphism.

\begin{definition}
Every finite-dimensional Lie algebra $L$ can be written as a semidirect sum of two parts called a solvable part and a semisimple part. The latter is called a \emph{Levi subalgebra} of $L$, and it is unique up to conjugation, so we will speak of ``the'' Levi subalgebra of $L$ and denote it as $LSA(L)$. For a variety $X$, we will denote $LSA(L(X))$ simply by $LSA(X)$.
\end{definition}

As mentioned before, the Lie algebra of a curve of genus 2 or higher is zero since its automorphism group is finite. The rest of the cases that arise in Theorem \ref{TH: classification of the quadrics} are studied in the next result.

\begin{theorem}
Let $k$ be an algebraically closed field of characteristic zero. As above, let $S_{m,n}$ be the the rational normal scroll with parameters $m,n$, and let $V$ be the image of the Veronese map $\pj^2\rightarrow\pj^5$.
 \begin{enumerate}
  \item $LSA(S_{m,n})\cong\fsl_2$ if $m\neq n$.
  \item $LSA(S_{m,m})\cong\fsl_2+\fsl_2$ (a direct sum of two Lie algebras)
  \item $LSA(V)\cong\fsl_3$.
 \end{enumerate}
\end{theorem}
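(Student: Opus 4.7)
The plan is, for each variety, to exhibit a semisimple Lie subalgebra of $L(X)$ arising from an explicit group action and to verify that the remaining directions in $\PGL_{N+1}(X)$ form a solvable radical, so that the exhibited subalgebra is the full Levi part. The identification of the variety with a concrete model (a projectivized vector bundle on $\pj^1$, a product $\pj^1\times\pj^1$, or $\pj^2$ in its Veronese embedding) will make each natural group action visible.

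For part (1), identify $S_{m,n}$ with $\pj(\mathcal{O}_{\pj^1}(m) \oplus \mathcal{O}_{\pj^1}(n))$ embedded by the complete linear system of its tautological $\mathcal{O}(1)$; the given parametrization matches this description in the standard affine chart. The $\PGL_2$-action on the base $\pj^1$ lifts to the scroll because the summands are $\PGL_2$-equivariant, giving $\fsl_2 \hookrightarrow L(S_{m,n})$. When $m \neq n$ the line ruling is unique, so every element of $\PGL_{m+n+2}(S_{m,n})$ descends to an automorphism of the base $\pj^1$, producing a surjection onto $\PGL_2$ whose kernel consists of automorphisms fixing each fiber. Modulo scalars this kernel corresponds to bundle automorphisms of $\mathcal{O}(m) \oplus \mathcal{O}(n)$, and since $H^0(\pj^1, \mathcal{O}(n-m)) = 0$ for $m > n$, these are upper triangular and hence solvable. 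Levi's theorem then gives $LSA(S_{m,n}) \cong \fsl_2$.

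For part (2), the parametrization of $S_{m,m}$ realizes it as the image of $\pj^1 \times \pj^1$ under the complete linear system of $\mathcal{O}(m, 1)$, so the two independent $\PGL_2$-actions on the factors both preserve the embedding and produce $\fsl_2 + \fsl_2 \hookrightarrow L(S_{m,m})$. For $m > 1$ the bidegree is asymmetric and no factor-swap arises, while for $m = n = 1$ (the smooth quadric in $\pj^3$) the factor-swap exists but is discrete and invisible to the Lie algebra; in both cases the remaining linear automorphisms are scalars, so $LSA(S_{m,m}) \cong \fsl_2 + \fsl_2$. For part (3), the $2$-Veronese embedding $\pj^2 \hookrightarrow \pj^5$ is $\PGL_3$-equivariant via the natural representation on $\Sym^2 k^3 = H^0(\pj^2, \mathcal{O}(2))$, so $\fsl_3 \hookrightarrow L(V)$; conversely, any linear automorphism of $V$ induces an automorphism of $\pj^2 \cong V$ and thus already lies in $\PGL_3$, giving $L(V) \cong \fsl_3$, which is already semisimple.

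The main obstacle is the rigidity step in case (1): one must show that every linear automorphism of $S_{m,n}$ preserving the ruling really does come from a bundle automorphism of $\mathcal{O}(m) \oplus \mathcal{O}(n)$ up to scalar. This rests on the scroll being linearly normal (the embedding uses the complete linear system of $\mathcal{O}_{\pj(V)}(1)$) and on the cohomology vanishing $H^0(\pj^1, \mathcal{O}(n - m)) = 0$, which together force the block-upper-triangular shape of the kernel and seal the solvability half of the argument. The analogous upper bounds in cases (2) and (3) are lighter because the exhibited symmetry group is already large enough to account for all linear automorphisms up to discrete pieces.
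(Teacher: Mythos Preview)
Your argument is correct and goes well beyond what the paper itself offers: there the proof is simply a reference to \cite[Section~3.4]{Oda1988a} together with the remark that any individual $(m,n)$ can be verified in Magma. You instead identify each surface with a standard model, exhibit the semisimple group acting on it ($\PGL_2$ on the base of the bundle, the two $\PGL_2$ factors on $\pj^1\times\pj^1$, or $\PGL_3$ on $\pj^2$), and then control the remaining directions by a bundle-endomorphism computation and the vanishing $H^0(\pj^1,\mathcal{O}(n-m))=0$. This is essentially the calculation that underlies Oda's toric description, so what your approach buys is self-containment rather than a genuinely different mechanism; the paper's route buys brevity at the cost of opacity.

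One small imprecision is worth flagging: the identification of $S_{m,n}$ with $\pj(\mathcal{O}(m)\oplus\mathcal{O}(n))$ embedded by the complete tautological system is only literally an embedding when $n\geq1$. For $n=0$ (which does occur here, e.g.\ in genus~$4$) the system contracts the negative section and $S_{m,0}$ is the cone over a rational normal curve, not a $\pj^1$-bundle. Your conclusion survives---every linear automorphism of the cone fixes the vertex and lifts uniquely to the minimal resolution $F_m$, where your triangular bundle-automorphism analysis applies verbatim and yields a solvable kernel over $\PGL_2$---but the phrasing should acknowledge that in this boundary case the projective bundle is a resolution of the scroll rather than the scroll itself.
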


\begin{proof}
See \cite[Section 3.4]{Oda1988a}. Additionally, for any fixed pair $m,n$ one can easily check the claim, for example in Magma.
\end{proof}

It is clear now that just by looking at the dimension of the Levi subalgebra we can discard the two cases where the curve is not trigonal. In other words, we can recognize a trigonal curve by the dimension of its Levi subalgebra.

\begin{corollary}
Let $k$ be any field of characteristic zero, let $C$ be a canonical curve and $X$ be the intersection of the quadrics that contain it. Then one of the following occurs:
\begin{itemize}
\item If $\dim LSA(X)=0$ then $X=C$ and $C$ is not trigonal.
\item If $\dim LSA(X)=3$ then $X$ is a twist of $S_{m,n}$ with $m\neq n$ and $C$ is trigonal.
\item If $\dim LSA(X)=6$ then $X$ is a twist of $S_{m,m}$ and $C$ is trigonal.
\item If $\dim LSA(X)=8$ then $X\cong V$ and $C$ is not trigonal.
\end{itemize}
\end{corollary}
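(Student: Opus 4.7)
The plan is to reduce to the case $k=\overline{k}$, where the statement follows immediately from Theorem \ref{TH: classification of the quadrics} combined with the preceding computation of $LSA$ for scrolls and the Veronese surface.

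First I would justify the descent. Although a Levi subalgebra is only defined up to conjugation, its dimension is an invariant, because $LSA(L(X))$ is a vector-space complement to the solvable radical of $L(X)$, which is the \emph{unique} maximal solvable ideal. Both $L(X)$ and its radical are cut out by polynomial (in fact linear) conditions with coefficients in $k$, so their dimensions, and hence $\dim LSA(X)$, are unchanged upon base change to $\overline{k}$. Therefore it suffices to prove the four bullets when $k$ is algebraically closed; the passage from ``isomorphic to $S_{m,n}$ over $\overline{k}$'' to ``twist of $S_{m,n}$ over $k$'' is then just the definition of a twist.

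Working over $\overline{k}$, Theorem \ref{TH: classification of the quadrics} leaves only three possibilities for the pair $(C,X)$: $X=C$, $X\cong S_{m,n}$ for some $m\geq n$, or $X\cong V$. In the first case $\PGL_{g}(C)$ is finite because a canonical curve of genus $\geq 2$ has general type, so $L(X)=0$ and $\dim LSA(X)=0$. In the second case the previous theorem yields $\dim LSA(X)=3$ when $m\neq n$ and $\dim LSA(X)=6$ when $m=n$. In the third case it gives $\dim LSA(X)=8$.

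Since the four possible values $0,3,6,8$ are pairwise distinct, the observed value of $\dim LSA(X)$ singles out exactly one case, which delivers the claim. The only real subtlety is the descent argument above; everything else is bookkeeping from the two results already in hand.
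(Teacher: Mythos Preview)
Your argument is correct and follows the same route as the paper, which simply says ``Pass to $\overline{k}$ and apply the previous theorem.'' You have merely unpacked that sentence by justifying why $\dim LSA(X)$ is stable under base change and then invoking Theorem~\ref{TH: classification of the quadrics} together with the preceding computation of the Levi subalgebras; nothing further is needed.
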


\begin{proof}
Pass to $\overline{k}$ and apply the previous theorem.
\end{proof}

If the surface is the Veronese surface, the algorithm terminates and reports that the curve is not trigonal. Nevertheless, for the sake of completeness we must mention that an analysis similar to the scrolls below can be performed and results in the constructive recognition of the Veronese surface; this provides an isomorphism to a plane quintic curve, and knowing a point the pencil of lines through that point will give a $g^1_4$. This only occurs for genus 6.






\subsection{The case $m\neq n$}\label{SEC: Case m<>n}

Since the Levi subalgebras of these scrolls are always $\fsl_2$, and thanks to the classification above, we have a necessary and sufficient condition for to the recognition problem. Following Section \ref{SEC: sl2} we assume that we have constructed an isomorphism $\sigma\colon\fsl_2\rightarrow LSA(X)$.


The representation of $\fsl_2$ given by the inclusion of $S_{m,n}$, $m\neq n$ into projective space is known.

\begin{theorem}
Consider $S_{m,n}\subset\pj^{m+n+1}$, $m\neq n$. The module $\fsl_2$-module induced on the underlying vector space $V\cong k^{m+n+2}$ decomposes into irreducible modules as $\Sym^m(k^2)\oplus\Sym^n(k^2)$.
\end{theorem}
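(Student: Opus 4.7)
The plan is to construct an explicit $\mathrm{SL}_2$-action on $S_{m,n}$ whose induced representation on the ambient vector space $k^{m+n+2}$ is $\Sym^m(k^2)\oplus\Sym^n(k^2)$, and then to argue that the corresponding copy of $\fsl_2$ inside $L(S_{m,n})$ must coincide with $LSA(S_{m,n})$.

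First, identify $k^{m+n+2}$ with $\Sym^m(k^2)\oplus\Sym^n(k^2)$ using the monomial bases $(s_0^m,s_0^{m-1}s_1,\ldots,s_1^m)\cup(s_0^n,s_0^{n-1}s_1,\ldots,s_1^n)$, which match the two coordinate blocks $(1,s,\ldots,s^m)$ and $(t,st,\ldots,s^nt)$ of the parametrization of $S_{m,n}$ after setting $s=s_1/s_0$ and absorbing the affine coordinate $t$ as a relative scaling between the two summands. The natural action of $\mathrm{SL}_2$ on $k^2=\mathrm{span}(s_0,s_1)$ induces, via symmetric powers, a linear action on $k^{m+n+2}$, which by construction preserves $S_{m,n}$: a parametrized point $(\lambda_1 s_0^{m-i}s_1^i)_i\oplus(\lambda_2 s_0^{n-j}s_1^j)_j$ is sent to the analogous point with $(s_0,s_1)$ replaced by $g\cdot(s_0,s_1)$, which is still on the scroll. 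Passing to $\PGL$, this yields a morphism $\rho:\mathrm{SL}_2\to\PGL_{m+n+2}$ whose image stabilizes $S_{m,n}$.

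Next, differentiating $\rho$ at the identity gives a Lie algebra homomorphism $d\rho:\fsl_2\to L(S_{m,n})$, which is injective because $\Sym^m(k^2)\oplus\Sym^n(k^2)$ is a faithful $\fsl_2$-representation whenever $m+n\geq 1$. By the preceding theorem, $LSA(S_{m,n})\cong\fsl_2$ has dimension $3$. The image of $d\rho$ is then a $3$-dimensional semisimple subalgebra of $L(S_{m,n})$; by Malcev's theorem every semisimple subalgebra is contained in a Levi subalgebra, so by comparison of dimensions the image of $d\rho$ equals a Levi subalgebra. Since any two Levi subalgebras are conjugate inside $L(S_{m,n})$ and conjugation preserves the module structure on $k^{m+n+2}$, the decomposition asserted in the theorem follows from the explicit construction of $\rho$.

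The main technical obstacle is the setup in Step 1 — correctly identifying the coordinates of the ambient space with $\Sym^m(k^2)\oplus\Sym^n(k^2)$ and verifying the compatibility with the scroll parametrization. Since $m\neq n$, the two summands are non-isomorphic irreducible $\fsl_2$-modules, so the block-diagonal form of $\rho$ (with no cross-terms between the $x$- and $y$-blocks) is forced rather than a choice, ultimately because the ruling of $S_{m,n}$ is unique.
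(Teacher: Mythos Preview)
Your argument is correct and considerably more informative than the paper's own proof, which consists solely of a reference to \cite[Section~3.4]{Oda1988a} together with the remark that any fixed $(m,n)$ can be checked by machine. You instead exhibit the $\mathrm{SL}_2$-action explicitly, verify it preserves the scroll, and then use the Levi decomposition to identify your copy of $\fsl_2$ with $LSA(S_{m,n})$. This is a genuine proof rather than a citation, and it makes transparent \emph{why} the two irreducible summands have the dimensions they do: they come from the two blocks of the parametrization, on which $\mathrm{SL}_2$ acts through $\Sym^m$ and $\Sym^n$ of its standard representation.

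One point deserves a word of care. You assert that ``conjugation preserves the module structure on $k^{m+n+2}$'', meaning that any two Levi subalgebras of $L(S_{m,n})$ induce isomorphic $\fsl_2$-modules on $V=k^{m+n+2}$. Malcev's theorem gives conjugacy by an inner automorphism $\exp(\mathrm{ad}_z)$ with $z$ in the nilradical of $L(S_{m,n})$, but to transport the module structure you need this automorphism to be induced by conjugation inside $\GL(V)$, i.e.\ you need $z$ to act nilpotently on $V$ so that $\exp(z)\in\GL(V)$ makes sense. This holds here because $L(S_{m,n})$ is the Lie algebra of the algebraic group $\PGL_{m+n+2}(S_{m,n})$, whose unipotent radical has nilpotent Lie algebra acting nilpotently in the given representation; alternatively, argue directly at the group level using conjugacy of Levi subgroups. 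Either way the step is routine, but it is worth making explicit.
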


\begin{proof}
See \cite[Section 3.4]{Oda1988a}. Additionally, for any fixed pair $m,n$ one can easily check the claim, for example in Magma.
\end{proof}

\begin{remark}\label{REM: Images of h,x,y in two-block rep}
Consider the representation of $\fsl_2$ corresponding to the module structure, $Rep\colon\fsl_2\rightarrow\fgl(k^{m+n+2})$. With respect to a suitable basis, the matrices of the elements $h,x,y$ in a Chevalley basis will consist on two blocks of dimensions $m+1$ and $n+1$ having the form given in Remark \ref{REM: Images of h,x,y in Sym^n}. Thus $Rep(h)$ is a diagonal matrix with eigenvalues $m,m-2,\ldots,-m,n,n-2,\ldots,-n$, and
\[
 \setlength{\arraycolsep}{0.3em}
 Rep(y)=\left(\begin{array}{ccccc|ccccc}
 0 &&&&& \\
 m & 0 &&&&& \\
 & \ddots & \ddots &&&&& \\
 & & 2 & 0 &&&&& \\
 & & & 1 & 0 &&&& \\
 \hline
 &&&&& 0 \\
 &&&&& n & 0 \\
 &&&&&& \ddots & \ddots \\
 &&&&&&& 2 & 0 \\
 &&&&&&&& 1 & 0 \\
 \end{array}\right)
\]  
\end{remark}

\begin{theorem}\label{TH: line fibration for Smn with m>n}
Let $X\in\pj^{m+n+1}$ be a variety isomorphic to $S_{m,n}$ with $m>n$ and let $v,w\in k^{m+n+2}$ such that $v$ is an eigenvector of $Rep(h)$ with largest eigenvalue and $w=Rep(y)\cdot v$. Then the function $X\rightarrow\pj^1$ defined by $w/v$ has as its fibers the lines of $X$.
\end{theorem}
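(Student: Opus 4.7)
The plan is to transport the problem to the standard scroll $S_{m,n}$ via the projective isomorphism $X \cong S_{m,n}$, and then check the claim directly in the standard parametrization. First I would choose a basis of $k^{m+n+2}$ adapted to the Chevalley basis of $LSA(X) \cong \fsl_2$, so that the matrices of $Rep(h), Rep(x), Rep(y)$ take the block form described in Remark \ref{REM: Images of h,x,y in two-block rep}. By the preceding theorem, under the isomorphism this basis splits into $e_0^{(m)}, \dots, e_m^{(m)}$ spanning a $\Sym^m(k^2)$-summand and $e_0^{(n)}, \dots, e_n^{(n)}$ spanning a $\Sym^n(k^2)$-summand, with $h$-weights $m, m-2, \dots, -m$ and $n, n-2, \dots, -n$ respectively.

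Now the hypothesis $m > n$ makes the weight $m$ strictly greater than every weight appearing in the second block, so the $m$-eigenspace of $Rep(h)$ is one-dimensional and spanned by $e_0^{(m)}$; hence $v$ is a scalar multiple of $e_0^{(m)}$. From the explicit form of $Rep(y)$ in Remark \ref{REM: Images of h,x,y in two-block rep}, $Rep(y) \cdot e_0^{(m)} = m \cdot e_1^{(m)}$, so $w$ is a nonzero scalar multiple of $e_1^{(m)}$ lying again in the $\Sym^m$-summand.

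Next I would match these weight vectors with the coordinates of the standard parametrization $(s,t)\mapsto(1:s:\cdots:s^m:t:st:\cdots:s^nt)$. Setting up the $\Sym^m(k^2)$-module in its standard realization on binary forms of degree $m$ with the action from Section \ref{SEC: Reps of Lie algebras}, a short direct computation of the infinitesimal action shows that the weight basis $x^m, x^{m-1}y, \dots, y^m$ has $h$-weights $m, m-2, \dots, -m$ and matches (up to positive rescaling) the pattern of $Rep(y)$; substituting $x=1, y=s$ identifies the weight vectors $e_0^{(m)}, e_1^{(m)}$ with the ambient coordinates $1$ and $s$ of the first block. Therefore, pulled back to $S_{m,n}$, the rational function $w/v$ is a nonzero constant multiple of $s$, whose level sets are precisely the lines $s = \text{const}$ of $S_{m,n}$. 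Pulling back again along the projective isomorphism $X \cong S_{m,n}$ (which carries lines to lines), the fibers of $w/v$ on $X$ are the lines of $X$, as claimed.

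The only delicate point is the basis identification in the last paragraph: one must verify that the Chevalley basis chosen abstractly (to bring $Rep$ into the form of the Remark) is compatible with the monomial basis coming from the parametrization, so that the highest-weight vector really does correspond to the coordinate $1$ rather than to some other linear combination. This reduces to the standard fact that the irreducible $\fsl_2$-module $\Sym^m(k^2)$ has a unique highest-weight line, which fixes this identification up to scalar and hence does not affect the ratio $w/v$.
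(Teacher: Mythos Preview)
Your proposal is correct and follows essentially the same route as the paper: reduce to the standard scroll $S_{m,n}$, use Remark~\ref{REM: Images of h,x,y in two-block rep} to identify $v$ with the first coordinate and $w$ with $m$ times the second, and observe that $w/v$ pulls back to a constant multiple of $s$ under the parametrization. Your write-up is more careful than the paper's about why the highest-weight line in the $\Sym^m$ summand matches the coordinate ``$1$'' of the parametrization, a point the paper handles with the single phrase ``the isomorphism will respect the construction.''
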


\begin{proof}
We prove this for the scroll and the result will follow since the isomorphism will respect the construction. So we consider $S_{m,n}$ which is the image of $(s:t)\mapsto(1:s:s^2:\ldots:s^m:t:st:s^2t:\ldots:s^nt)\subset\pj^{m+n+1}$; denote the coordinates of $\pj^{m+n+1}$ as $x_{0,0},\ldots,x_{0,m},x_{1,0},\ldots,x_{1,n}$. By Remark \ref{REM: Images of h,x,y in two-block rep} we must have $v=(\lambda,0,\ldots,0)$ and $w=(0,m\lambda,0,\ldots,0)$. In terms of the parametrization, $w/v$ on $S_{m,n}$ is $m\lambda s/\lambda=ms$ and its fibres are clearly lines.
\end{proof}

It is worth noting that one can extend this procedure to calculate a basis of eigenvectors of $k^{m+n+2}$. One can determine the eigenvalues of $Rep(h)$ and read off $m$ and $n$. Then we just need an eigenvector for $m$ and $n$, but if $m,n$ have the same parity there is an ambiguity since the eigenspace for $n$ has dimension 2; it suffices to intersect it with the kernel of $Rep(x)$ in order to isolate the correct unidimensional eigenspace for $n$. By succesive application of $Rep(y)$ we complete the basis of eigenvectors, and the conversion from the canonical basis to the new basis produces a linear isomorphism of $\pj^{m+n+1}$ which restricts to an isomorphism between $X$ and $S_{m,n}$.


\subsection{The case $m=n$}\label{SEC: Case m=n}

The idea is similar to the previous case but the details are somewhat different.

One can obtain representations of $2\fsl_2$ from representations of $\fsl_2$, or in other words $2\fsl_2$-modules from $\fsl_2$-modules. Indeed, in general, if $V_1$ is a $L_1$-module and $V_2$ is a $L_2$-module, then $V_1\otimes V_2$ is a $(L_1+L_2)$-module via the action $(l_1+l_2)\cdot(v_1\otimes v_2)=(l_1\cdot v_1)\otimes v_2+v_1\otimes(l_2\cdot v_2)$.

We saw in Section \ref{SEC: Reps of Lie algebras} that the irreducible representations of $\fsl_2$ are classified by their dimension. Likewise, every irreducible representation of $2\fsl_2$ is the tensor product of two irreducible representations of $\fsl_2$ and determined by the dimensions of the two parts. This is due to the characterization of a representation of a semisimple algebra being irreducible precisely when it has a highest weight (see \cite[Chapter 8]{deGraaf2000a} for definitions and properties), and the fact that the tensor product of two irreducible representations of $\fsl_2$ has a highest weight given by the highest eigenvalues of the matrices corresponding to the respective $h$ elements in their Chevalley bases (see Remark \ref{REM: Images of h,x,y in Sym^n}).

On the other hand, the representation of the model $S_{m,m}\subset\pj^{2m+1}$ is the product of the $2$-dimensional and the $(m+1)$-dimensional representations of $\fsl_2$ (see \cite[Section 3.4]{Oda1988a}, or calculate for particular values of $m$). In other words, we have two representations of $\fsl_2$ corresponding to the two summands of $2\fsl_2$. Both are given, in terms of matrices, by the Kronecker product (see \cite[Section 4.2]{HornJohnson1991a}) of the respective components (recall that the Kronecker product is not commutative); in particular, the image of $h$ by the tensor product representations is up to similarity one of the Kronecker products of
\[
 \setlength{\arraycolsep}{0.1em}
 \begin{pmatrix} m\\ &m-2\\ &&\ddots\\ &&&-m \end{pmatrix}
 \qquad\mbox{and}\qquad
 \begin{pmatrix} 1&0\\0&-1 \end{pmatrix}
\]
which are
\[
 \setlength{\arraycolsep}{0.1em}
 \left(\begin{array}{cccc|cccc}
 m &&&&& \\
 & m-2 &&&&& \\
 & & \ddots &&&&& \\
 & & & -m &&&& \\
 \hline
 &&&& m & \\
 &&&& & m-2 & \\
 &&&& & & \ddots & \\
 &&&& & & & -m \\
 \end{array}\right)
 \qquad\mbox{or}\qquad
 \left(\begin{array}{ccccccc}
 1 && \multicolumn{1}{|c}{} \\
 & -1 & \multicolumn{1}{|c}{} \\ \cline{1-4}
 && \multicolumn{1}{|c}{1} && \multicolumn{1}{|c}{} \\
 && \multicolumn{1}{|c}{} & -1 & \multicolumn{1}{|c}{} \\ \cline{3-4}
 &&&& \ddots \\ \cline{6-7}
 &&&&& \multicolumn{1}{|c}{1} \\
 &&&&& \multicolumn{1}{|c}{} & -1 \\
 \end{array}\right)
\]
In short, we have two copies of $\fsl_2$ acting on the underlying vector space, one of them decomposes into irreducibles as a sum of $m$ two-dimensional representations and the other as a sum of two $m$-dimensional representations.

Once we decompose $2\fsl_2$ into two copies of $\fsl_2$, how to discern the two tensor product representations? Note that it is only needed if $m>1$. We can decide this by considering the matrices for $h$. Note that the square of a matrix similar to the right hand side is the identity, which is not the case for the matrix on the left. So we can identify this case, for example, by picking one of the two matrices and checking if the degree of its minimal polynomial is 2. Once we have distinguished the two representations, we concentrate on the left hand side representation. Call it $Rep\colon\fsl_2\rightarrow\fgl(k^{2m+2})$. We obtain a result very similar to that of the case $m\neq n$.

\begin{theorem}\label{TH: line fibration for Smm}
Let $X\in\pj^{2m+1}$ be a variety isomorphic to $S_{m,m}$ and let $v,w\in k^{2m+2}$ such that $v$ is an eigenvector of $Rep(h)$ with eigenvalue $m$ and $w=Rep(y)\cdot v$. Then the function $X\rightarrow\pj^1$ defined by $w/v$ has as its fibers the lines of $X$.
\end{theorem}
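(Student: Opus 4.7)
The plan is to mirror the proof of Theorem \ref{TH: line fibration for Smn with m>n}: verify the statement on the model scroll $S_{m,m}$ itself and transfer it through the projective isomorphism $X\cong S_{m,m}$. That isomorphism intertwines the $\fsl_2$-action, hence preserves both the eigenspaces of $Rep(h)$ and the rational map defined by the pair $(v,w)$, so nothing is lost by passing to the model.

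On $S_{m,m}$ the representation $Rep$ singled out in the preceding analysis is the ``large'' copy of $\fsl_2$, whose action on $k^{2m+2} = \Sym^m(k^2)\otimes\Sym^1(k^2)$ has the block form $h\otimes I$. In the coordinate basis $(x_{0,0},\ldots,x_{0,m},x_{1,0},\ldots,x_{1,m})$ associated to the parametrization $(s,t)\mapsto(1:s:\ldots:s^m:t:st:\ldots:s^m t)$, the matrix $Rep(h)$ is diagonal with eigenvalues $m,m-2,\ldots,-m,m,m-2,\ldots,-m$, so its $m$-eigenspace is two-dimensional, spanned by $x_{0,0}$ and $x_{1,0}$. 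A general eigenvector thus corresponds to the linear form
\[
 v = \alpha\, x_{0,0} + \beta\, x_{1,0},
\]
which, restricted to the scroll, evaluates to $\alpha+\beta t$.

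Using the matrix of $y$ recorded in Remark \ref{REM: Images of h,x,y in Sym^n}, which within each of the two blocks of size $m+1$ sends the first basis vector to $m$ times the second, we obtain
\[
 w = Rep(y)\cdot v = m\alpha\, x_{0,1} + m\beta\, x_{1,1},
\]
which, restricted to the scroll, evaluates to $m\alpha s + m\beta st = ms(\alpha+\beta t)$. The common factor $\alpha+\beta t$ therefore cancels, and the morphism $p\mapsto[v(p):w(p)]$ reduces on $S_{m,m}$ to $p\mapsto[1:ms]$. This is precisely the projection to the first factor of the Segre-Veronese description $\pj^1\times\pj^1\to S_{m,m}$, whose fibres are the lines $s=\mathrm{const}$ of the ruling.

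The main subtlety relative to the $m\neq n$ case is the two-dimensional ambiguity in the choice of $v$ inside the $m$-eigenspace; the point of the calculation above is precisely that this ambiguity enters $v$ and $w=Rep(y)\cdot v$ through the same scalar factor $\alpha+\beta t$ and so cancels in the ratio. Hence the map $w/v$ is canonical, independent of the choice of $v$, and the conclusion transports back to $X$ verbatim through the projective isomorphism.
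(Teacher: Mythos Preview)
Your proof is correct and follows essentially the same route as the paper: reduce to the model $S_{m,m}$, identify the two-dimensional $m$-eigenspace as spanned by $x_{0,0}$ and $x_{1,0}$, compute $w$ by applying $Rep(y)$, and cancel the common factor $\alpha+\beta t$ to obtain the map $ms$. Your additional remark explaining why the two-dimensional ambiguity in $v$ is harmless is a nice clarification, but otherwise the argument matches the paper's proof line for line.
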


\begin{proof}
It suffices to prove this for the scroll $S_{m,m}$. We consider $S_{m,m}$ which is the image of $(s:t)\mapsto(1:s:s^2:\ldots:s^m:t:st:s^2t:\ldots:s^mt)\subset\pj^{2m+1}$; denote the coordinates of $\pj^{2m+1}$ as $x_{0,0},\ldots,x_{0,m},x_{1,0},\ldots,x_{1,m}$. Then by Remark \ref{REM: Images of h,x,y in two-block rep} any such $v$ must be equal to $(\lambda_1,0,\ldots,0,\lambda_2,0,\ldots,0)$ with the possibly nonzero entries at the positions $x_{0,0},x_{1,0}$. Then the coordinates of $w$ are the result of a right-shift and multiplication by $m$. As a result, the map defined by $w/v$ on $X$ is just $(m\lambda_1s+m\lambda_2st)/(\lambda_1+\lambda_2t)=ms$. Clearly its fibres are lines.
\end{proof}

\section{Computational experiences}\label{SEC: Computational experiences}

We have tested our implementation in Magma V2.14-7 against random examples of trigonal curves over the field of rational numbers. The computer used is a 64 Bit, Dual AMD Opteron Processor 250 (2.4 GHZ) with 8 GB RAM. We have generated trigonal curves in the following two ways:

\begin{enumerate}
 \item Let $C:f(x,y,z)=0$ homogeneous with $\deg_y f=3$. Then the projection $(x:y:z)\mapsto(x:z)$ is a $3:1$ map to $\pj^1$. The genus of a curve of degree 3 in $y$ and degree $d$ in $x$ is $2(d-1)$ generically. The size of the coefficients is controlled directly.

 \item Let $C$ be defined by the affine equation $\Resultant_u(F,G)=0$ where
\[\begin{array}{l@{\ }l}
0 = x^3-a_1(u)x-a_2(u) & =:F, \\
0 = y-a_3(u)-a_4(u)x-a_5(u)x^2 & =:G
\end{array}\]
for some polynomials $a_1,\ldots,a_5$. This clearly gives a field extension of degree 3, thus there is a $3:1$ map from $C$ to the affine line. Examples show that the degree and coefficient size for a given genus are significantly larger than for the previous construction.
\end{enumerate}

It is important to remark that we need not compute either the genus or the hyperellipticity of the curve beforehand, since both things are detected by the algorithm: the genus is a byproduct of the computation of the canonical image, and the canonical image of a hyperelliptic curve is a rational normal curve, a situation detected by the Lie algebra computation.

For the same reason, we have only performed our timing tests on trigonal curves: when the intersection of the quadrics containing the canonical curve is unidimensional or isomorphic to the Veronese surface, this will be detected by inspecting the dimension of the whole Lie algebra or of the Levi subalgebra respectively.

These are the time results (in seconds) for samples of fifty random curves of degree 3 in one of the variables, for various values of total degree, genus and coefficient size. Note the effect of the coefficient size on the computing time.

\[\begin{array}{ccc|ccc}
\mathrm{genus} & \deg_x & \mathrm{bit\ height} & \mathrm{min} & \mathrm{avg} & \mathrm{max} \\ \hline
4 & 3 & 8 & 0.020 & 0.030 & 0.080 \\
4 & 3 & 200 & 0.140 & 2.606 & 36.240 \\
18 & 10 & 8 & 26.880 & 27.843 & 28.410 \\
30 & 16 & 2 & 1690.340 & 2099.167 & 2435.630 \\ 
30 & 16 & 8 & \multicolumn{3}{c}{\mbox{out of memory}} \\
\end{array}\]

For the second method, we choose $a_1,\ldots,a_5$ randomly of degree $d$ and integer coefficients between $-e$ and $e$. These are the time results (in seconds) for samples of fifty random curves, for various values of $d,e$.

\[\begin{array}{c|ccc|ccc}
(d,e) & \mathrm{genus} & \deg & \mathrm{bit\ height} & \mathrm{min} & \mathrm{avg} & \mathrm{max} \\ \hline
(4,2) & 3-4 & 17-20 & 10-18 & 0.030 & 0.207 & 0.980 \\
(4,2000) & 4 & 20 & 85-95 & 0.630 & 0.847 & 1.490 \\
(5,2) & 3-6 & 20-25 & 16-21 & 0.050 & 1.939 & 2.890 \\
(5,200) & 6 & 25 & 76-86 & 6.680 & 8.975 & 11.370 \\
\end{array}\]


Our Magma implementation can be obtained by contacting us directly.

\bibliographystyle{amsplain}

\end{document}